\documentclass[a4paper,11pt]{article}
\usepackage{amsfonts,amsmath,amssymb,amsthm,enumerate,bm,dsfont,a4,}
\usepackage[a4paper,text={6.25in,9.0in},centering]{geometry}
\usepackage{color, soul}
\allowdisplaybreaks[4] 
\theoremstyle{plain}
\newtheorem{theorem}{\noindent\bf Theorem}[section]
\newtheorem{corollary}[theorem]{\noindent\bf Corollary}

\newtheorem{lemma}[theorem]{\noindent\bf Lemma}

\theoremstyle{remark}

\numberwithin{equation}{section}

\def\be{\begin{eqnarray}}%%
	\def\ee{\end{eqnarray}}%%
\def\ben{\begin{eqnarray*}}%%
	\def\een{\end{eqnarray*}}%%
\def\benum{\begin{enumerate}}%%
	\def\eenum{\end{enumerate}}%%

\newcommand{\lr}{\left(}
\newcommand{\rr}{\right)}
\newcommand{\lek}{\left[}
\newcommand{\rek}{\right]}
\newcommand{\lge}{\left\{ }
\newcommand{\rge}{\right\} }

\newcommand{\vertiii}[1]{{\left\vert\kern-0.25ex\left\vert\kern-0.25             ex\left\vert #1 \right\vert\kern-0.25ex\right\vert     
		\kern-0.25ex\right\vert}}
\title{\bf }
\title{Some weighted modular inequalities in variable Lebesgue spaces}
\author{Megha Madan and Arun Pal Singh \footnote{the corresponding author}}

\date{}

\begin{document}
	\maketitle
	
	\begin{abstract}
We establish a necessary and sufficient weight condition so that  a modular inequality holds  for a generalized integral operator $T_\phi$ on non-negative non-increasing functions in weighted variable Lebesgue spaces $L_w^{p(x)}.$ We give a condition under which the mentioned modular inequality leads to a norm inequality. Lastly, examples are given to demonstrate our main result, to obtain modular inequalities for some well known operators.
		
\bigskip\noindent
\\ 2020 \emph{AMS Subject Classification.} 26D10, 26D15, 46E30.\\
\emph{Key words and Phrases.}  $B_p$-class of weights, non-increasing functions, variable Lebesgue spaces, Hardy averaging operator, modular inequality.
	\end{abstract}
	
\section{Introduction}
Let $w$ be a weight, i.e., a non-negative measurable locally integrable function on the domain of discussion. In 1972, Muckenhoupt \cite{mu} identified all those weights $w$ for which the inequality 
\begin{equation} \label{eq1}
   \int_{0}^{\infty}  \left | \frac{1}{x} \int_{0}^{x} f(t) dt \right |^p w(x)dx  \le C  \int_{0}^{\infty} |f(x)|^p w(x) dx,
\end{equation}
holds for all measurable functions $'f'$ defined on $[0,~\infty),$ where $ 1 \le p  < \infty.$ In the above inequality, the RHS is assumed to be finite, i.e., 
\be \label{eq1a}
\|f\|_{L^p_w}:=\int_{0}^{\infty} |f(x)|^p w(x) dx < \infty.
\ee
The collection of all measurable functions $f$ satisfying \eqref{eq1a} is denoted by $L^p_w,~1 \le p < \infty$ and is called the weighted Lebesgue space. $L^p_w$ is a Banach space with respect to the norm $\|f\|_{L^p_w}.$ Also, the expression in the LHS of \eqref{eq1}, denoted as 
\[Hf(x):=\frac{1}{x} \int_{0}^{x} f(t) dt,~~ x>0\]
is known as the Hardy averaging operator.\\

\noindent
Throughout the paper, only measurable functions are considered. Also, the constants $C$ involved in various inequalities are assumed to be positive and finite, and may take different values when appearing at different places. \\ 

\noindent
The study of the inequality  \eqref{eq1}, for non-negative non-increasing ($\downarrow $) functions has  also been made extensively after 1990, when during an attempt to identify the weights $w$ so that the Hardy-Littlewood maximal operator $(Mf)(x):=\sup \frac{1}{|Q|} \int_Q |f(y)|dy $ is bounded on Lorentz spaces $\Gamma_p(w),$ (see \cite{bs2, hu, sw} for Lorentz spaces), the authors Arino and Muckenhoupt (\cite{am}, Lemma 4.1) established that it is equivalent to determine the weights $w$ so that the inequality  \eqref{eq1} holds for non-negative non-increasing functions $f.$ Moreover, in the same paper, the authors also identified the weights for which the inequality \eqref{eq1} holds  for all non-negative non-increasing  functions. Precisely, they proved the following.\\

\noindent 
{\bf Theorem A. \cite{am}} \emph{If $1 \le p < \infty,$ then the inequality \eqref{eq1} holds  for all non-negative non-increasing function $f$ defined on $[0, \infty)$ if and only if there exists a constant $C>0$ such that 
\be \label{eq2}
\int_r^\infty \lr \frac{r}{x}\rr^p w(x)dx \le C \int_0^r w(x)dx,
\ee
for every $r>0.$} \\

\noindent
Collection of all those weights $w$ satisfying the condition \eqref{eq2} is known as $B_p$-class of weights. Thereafter, this class of weights was generalized suitably by Neugebauer \cite{nb1}, Andersen \cite{and}, Carro and Soria \cite{cs}, and Lai \cite{lai} to study the inequality  \eqref{eq1} for more general operators on non-negative non-increasing functions in weighted Lebesgue spaces. Below, we mention a result due to Carro and Lorente \cite{cl}, which gives a weight characterization for the boundedness of the generalized Hardy averaging operator 
\[
    S_{\psi}f(x) := \frac{1}{\Psi(x)}\int_{0}^{x} \psi(y) f(y) dy,
   \]
where $\Psi(x) = \int_{0}^{x} \psi(y) dy$ and $0 \le \psi \downarrow $ is locally integrable, for non-negative non-increasing functions $f\in  L^p_w,$ for $0<p<\infty.$ Precisely, the result given in \cite{cl} is \\

\noindent 
{\bf Theorem B. \cite{cl}} \emph{Let $0<p<\infty,$ then $S_{\psi}$ is bounded for non-negative non-increasing functions $f\in  L^p_w$ if and only if 
\[ \int_r^\infty \lr \frac{\Psi(r)}{\Psi(x)} \rr^p w(x) dx \le C \int_0^r w(x)dx \]
for all $r>0,$ with $C>0.$\\
}

\noindent
In 2008, Boza and Soria \cite{bs1} and Neugebauer \cite{nb2} in 2009 studied the inequality \eqref{eq1} for the Hardy averaging operator considered on variable Lebesgue spaces. Before giving the precise result due to Neugebauer \cite{nb2}, we give a brief introduction to these spaces.\\

Let $p:\mathbb{R}^+ \rightarrow [1,\infty)$ and $w$ be a weight. Then $L^{p(x)}_w$ is the collection of  all functions $f: \mathbb{R}^+ \rightarrow \mathbb{R}$ such that for some $\lambda >0,$
   \[
   \int_{\mathbb{R}^+} \lr \frac{|f(x)|}{\lambda} \rr^{p(x)}  w(x) dx < \infty,
   \]
   and is equipped with the Luxemburg norm
   \[
   ||f||_{p(x),w} := \inf \lge \lambda >0 : \int_{\mathbb{R}^+} \lr \frac{|f(x)|}{\lambda} \rr^{p(x)}  w(x) dx \leq 1 \rge.
   \]
For more on variable Lebesgue spaces, see \cite{dcaf}. \\

\noindent	
In \cite{bs1}, a new weight condition as below was introduced:
	\[\int_r^\infty \lr \frac{r}{sx}\rr^{p(x)} w(x)dx \le C \int_0^{r} \frac{w(x)}{s^{p(x)}}dx,~r,s>0,\]
	in context of the variable Lebesgue spaces, to study the inequality \eqref{eq1} for the Hardy averaging operator.
	For $s=1,$ the weight class was formally denoted as $B_{p(x)}$ by Neugebauer \cite{nb2}, used to identify weights for which the inequality \eqref{eq1} holds for the Hardy averaging operator $H$  in variable Lebesgue spaces $L_w^{p(x)}$. The following result was established in \cite{nb2}.\\
	
\noindent	
{\bf Theorem C. \cite{nb2}} \emph {Let $p:\mathbb{R}^+ \rightarrow [1,\infty),~~p(\cdot)\uparrow$ and $1 \le p(x) <p^{*}<\infty.$ Then, there exists a constant $C>0$ such that for all $f \in \mathcal{D},$ the inequality 
	\begin{equation} \label{eq3}
   \int_{0}^{\infty}  Hf(x)^{p(x)} w(x)dx  \le C \int_{0}^{\infty} f(x)^{p(x)} w(x) dx
\end{equation}
	holds if and only if $w\in B_{p(x)},$ where $p^{*}:= \sup_{x \in (0,\infty)} p(x)$ and $\mathcal{D}:=\{0 \le f\downarrow, ~~ f(0+)\le 1\}.$ }\\
	
	Motivated from the above works, we have attempted to establish a modular inequality analogous to \eqref{eq3} for a generalized operator $T_\phi$ considered on non-negative non-increasing  functions. Further, we have deduced modular inequalities on variable Lebesgue spaces for some well known operators, viz. the generalized Hardy averaging operator etc. Also, we give conditions under which in variable Lebesgue spaces, a modular inequality for $T_\phi$ leads to a norm inequality.

\section{Some Notions and a key Lemma}

We shall study the operator $T_\phi$ defined as

\[T_\phi f(x):=\int_0^x \phi(x,y) f(y)dy,\]
where $\phi: \mathbb{R}^+ \times \mathbb{R}^+ \rightarrow \mathbb{R}^+ .$ We set   
\[\Phi(x,r) := \int_{0}^{r} \phi(x,y) dy,\] 
and assume that
	 \benum 
	 \item[P1.] $\Phi(x,r) \leq B \Phi(x,t) \Phi(t,r) , ~0 < r \leq t \leq x,$ where $B >1$
	 \item[P2.] $T_\phi f$ is non-increasing, whenever $f \downarrow$
	 \item[P3.]  $\Phi(x,x) \leq 1$  
	 \eenum
	
	Below, we give an example of a function $\phi,$ so that the corresponding function $\Phi$ satisfies the above three conditions.\\

\noindent 
	{\bf Example.}
	Let $\phi(x,y) = \frac{x+2}{2x(x+1)},$  then $\Phi(x,r) = \frac{(x+2)r}{2x(x+1)}.$ Observe that $\frac{1}{2} < \Phi(x,x) < 1,$ i.e., the condition P3 is satisfied. Condition P1 may also be verified easily. Further, we have 
	\[
	T_\phi f(x):= g(x) Hf(x), 
	\]
where $ g(x) = \frac{x+2}{2(x+1)}.$ The function $T_\phi f$ is non-increasing since both the functions $g,~ Hf$ are non-negative and non-increasing, whenever $f$ is non-increasing. Thus P2 is also satisfied.\\ 
	
Next, we define a new weight class $B_{p(x)}(\phi)$ to be the collection of all those weights $w$ such that the following is satisfied: 
	\be \label{eq4a}
	\int_{r}^{\infty}\Phi(x,r)^{p(x)} w(x) dx \le C \int_{0}^{r} w(x) dx,~r>0.
	\ee
	The above inequality is equivalent to the following:
	 \begin{align} \label{eq6}
	 	 \int_{0}^{r}\Phi(x,x)^{p(x)} w(x) dx + \int_{r}^{\infty} \Phi(x,r)^{p(x)} w(x) dx \leq C \int_{0}^{r} w(x) dx, ~~r>0
	 \end{align}
	
	\noindent  
Thus, a weight $w \in B_{p(x)}(\phi)$ if it either satisfies \eqref{eq4a} or \eqref{eq6}. The $B_{p(x)}(\phi)$-constant of a weight $w \in B_{p(x)}(\phi)$ is to be given as
\[[w]_{B_{p(x)}(\phi)}: = \inf \left\{ C>1: w ~ \text{satisfies}~ \eqref{eq6}\right\}.\]

In the sequel, we shall need the following lemma giving a generalized equivalence for  $B_{p(x)}({\phi})$ weights.
	
  \begin{lemma} \label{lm1}
  	A weight $w \in B_{p(x)}({\phi})$ if and only if there exists a constant $C_{1}$ such that for every function $0 \le r(\cdot) \downarrow,$ we have
  	\be \label{eqa12}
  	\int_{0}^{r(x)} \Phi(x,x)^{p(x)} w(x) dx + \int_{r(x)}^{\infty} \Phi(x,r(x))^{p(x)} w(x) dx 
  	 \leq C_{1} \int_{0}^{r(x)} w(x) dx.
  	 \ee
  \end{lemma}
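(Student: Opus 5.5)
I read the statement in the only way that makes the integrals well defined. The set $\{x: x<r(x)\}$ plays the role of the interval $(0,r)$, and $\{x: x\ge r(x)\}$ plays the role of $(r,\infty)$. So \eqref{eqa12} is taken to mean
\[
\int_{\{x<r(x)\}}\Phi(x,x)^{p(x)}w(x)\,dx+\int_{\{x\ge r(x)\}}\Phi(x,r(x))^{p(x)}w(x)\,dx\le C_1\int_{\{x<r(x)\}}w(x)\,dx .
\]
The plan is to reduce this to \eqref{eq6} with a single constant $r=a$, where $a$ is the crossing point of $r(\cdot)$ with the identity.

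The converse direction is immediate. Taking $r(\cdot)\equiv r$ constant (which is non-increasing), the sets become $(0,r)$ and $[r,\infty)$, and \eqref{eqa12} is exactly \eqref{eq6}. Hence $w\in B_{p(x)}(\phi)$ with $[w]_{B_{p(x)}(\phi)}\le C_1$.

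For the direct direction, fix $0\le r(\cdot)\downarrow$ and set $a:=\sup\{x>0: x<r(x)\}$, with $a=0$ if the set is empty; the case $a=\infty$ is trivial. The key steps, in order, are:
\begin{enumerate}
\item[(i)] Since $r$ is non-increasing and $x\mapsto x$ is increasing, $\{x<r(x)\}$ is an interval with endpoints $0$ and $a$, i.e.\ $(0,a)$ or $(0,a]$. Up to the single point $a$, which is a null set, it equals $(0,a)$.
\item[(ii)] For $x>a$ we have $r(x)\le a$. Indeed, for any $y$ with $a<y<x$, monotonicity and $y\notin\{x<r(x)\}$ give $r(x)\le r(y)\le y$, and letting $y\downarrow a$ yields the claim.
\item[(iii)] Since $\phi\ge 0$, the map $t\mapsto\Phi(x,t)=\int_0^t\phi(x,y)\,dy$ is non-decreasing. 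Hence $\Phi(x,r(x))^{p(x)}\le\Phi(x,a)^{p(x)}$ for $x>a$.
\end{enumerate}
Inserting (i)--(iii), the left side of \eqref{eqa12} is at most
\[
\int_0^a\Phi(x,x)^{p(x)}w(x)\,dx+\int_a^\infty\Phi(x,a)^{p(x)}w(x)\,dx .
\]
By \eqref{eq6} with $r=a$, this is at most $C\int_0^a w(x)\,dx$, which equals $C\int_{\{x<r(x)\}}w(x)\,dx$. So \eqref{eqa12} holds with $C_1=[w]_{B_{p(x)}(\phi)}$, or any admissible constant in \eqref{eq6}.

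The main obstacle is not analytic but interpretive and concerns the boundary. One must fix the correct meaning of the $x$-dependent limits and check that the transition point $a$ is handled correctly, even when $r$ has a jump exactly at $a$. Step (ii) is the only place where monotonicity of $r$ is genuinely used, and the argument via $y\downarrow a$ avoids any continuity assumption on $r$. Note also that none of P1--P3 is needed here: only $\phi\ge0$ enters, through (iii).
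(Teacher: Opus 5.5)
Your proof is correct and follows the paper's argument. The paper also reads $\int_0^{r(x)}$ as integration over $\{x: x<r(x)\}$ and takes $r(\cdot)$ constant for the easy direction. For the other direction it bounds $\Phi(x,r(x))$ by $\Phi(x,i_r)$ beyond the crossing point $i_r=\sup\{x: x<r(x)\}$ (your $a$), then applies \eqref{eq6} with $r=i_r$ to get $C_1=C$; your step (ii), showing $r(x)\le a$ for $x>a$, makes explicit a point the paper uses without justification.
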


\begin{proof}
	Let \eqref{eqa12} holds for all $0 \le r(\cdot) \downarrow$. Take $r(x) = r,$ then \eqref{eq6} holds trivially. Hence $w \in B_{p(x)}(\phi).$\\
	
\noindent
Conversely, let  $w \in B_{p(x)}(\phi)$ with constant $C.$ Since $ y= r(x)$ is non-increasing and $y=x$ is increasing, there is a unique point $i_{r}$ such that  
	\begin{center}
	$(r(x)-x)(i_{r}-x) > 0$, where $x \neq i_{r}$. 
	\end{center}
	In fact, $i_r = \sup\lge x : x<r(x) \rge = \inf \lge x : r(x) < x \rge.$ Thus, 
	\be \label{eqn5a}
	\int_0^{r(x)} w(x)dx = \int_{\lge x : x<r(x) \rge } w(x) dx = \int_{0}^{i_r} w(x) dx.
	\ee
	Starting with the LHS of \eqref{eqa12}, using the fact that $\Phi(x,\cdot)$ is $\uparrow,$  and \eqref{eqn5a}, we have
	\begin{align*}
	\int_{0}^{r(x)} \Phi(x,x)^{p(x)} w(x) dx & + \int_{r(x)}^{\infty} \Phi(x,r(x))^{p(x)} w(x) dx \\	
   & =\int_{\lge x : x<r(x) \rge}  \Phi(x,x)^{p(x)} w(x) dx + \int_{\lge x: x>r(x) \rge}  \Phi(x,r(x))^{p(x)} w(x) dx \\
     &= \int_{0}^{i_r}  \Phi(x,x)^{p(x)} w(x) dx + \int_{i_r}^{\infty} \Phi(x,r(x))^{p(x)} w(x) dx \\
	&\leq \int_{0}^{i_r}  \Phi(x,x)^{p(x)} w(x) dx +\int_{i_r}^{\infty} \Phi(x,i_r)^{p(x)} w(x) dx  \\  
	&\leq C \int_{0}^{i_r} w(x) dx = C \int_{0}^{r(x)} w(x) dx .
	\end{align*}
Thus, $w$ satisfies \eqref{eqa12} with $C_{1} = C$.	
\end{proof}

\noindent
{\bf Note.} The conditions P2 and P3 together imply $T_\phi:\mathcal{D} \rightarrow \mathcal{D}.$ Also, the condition P2 is equivalent to the conditions that $\Phi(x,x)$ is $\downarrow$ and $\Phi(x,r)$ is $\downarrow$ in $x$ for $x>r.$ Consequently, the function $f(t) T_\phi f(t)^{p(x)-1}$ is decreasing in $x$ for each $t,$ and continuous, strictly decreasing in $t$ for each $x.$ \\

\noindent
Below we give an example to show that if P3 does not hold, then there exists some $f\in \mathcal{D}$ such that $T_\phi f \notin \mathcal{D}.$\\

\noindent
{\bf Example.} Let $f(y)=\frac{e^{-y}}{4}$ and $\phi(x,y)=\frac{8}{x}.$ 
 Then, $f\in \mathcal{D}$ and the condition P2 is also satisfied. But P3 does not hold, since we have 
\[\Phi(x,x)= \int_0^x \phi(x,y)dy =8 >1.\]
Also, $T_\phi f \notin \mathcal{D},$ since
\[\lim_{x\rightarrow 0^+} (T_\phi f)(x)=2 >1.\] 
\\

\noindent
Two exponents $p$ and $p'$ are conjugate if $\frac{1}{p}+\frac{1}{p'}=1.$ For $a, b \ge 0,$ the Young's inequality gives that
\[ab \le \frac{a^p}{p}+\frac{b^{p'}}{p'}, ~p>1.\]
The above inequality also holds for the variable exponent $p:\mathbb{R}^+ \rightarrow [1,\infty),$ so that $\frac{1}{p(x)}+\frac{1}{p'(x)}=1.$

\section{Main Result}
Now we give an auxiliary result which shall lead us to prove the main theorem.

\begin{theorem} \label{th13}
	Let  $ p: \mathbb{R}_{+} \rightarrow [1,\infty),~ p(\cdot) \uparrow$ and $1\leq p(x) \leq p^{*} < \infty$. Also, suppose the conditions P1-P3 hold. Then, the following inequality holds
	\be \label{a13}
	\int_{0 }^{\infty} T_{\phi}f(x)^{p(x)} w(x) dx \leq C_{2} \int_{0}^{\infty} f(x) T_{\phi}f(x)^{p(x)-1} w(x) dx,
	\ee	
	for every $f \in \mathcal{D} $ if and only if  $w \in B_{p(x)}(\phi),$ where the constants $C_2$ depend on the $B_{p(x)}(\phi)$ constant $C$ and constant $B$(in condition P1).
	
\end{theorem}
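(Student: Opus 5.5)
\emph{Necessity.} This direction I would settle by testing \eqref{a13} on characteristic functions. For fixed $r>0$ take $f=\chi_{(0,r)}$, which belongs to $\mathcal{D}$. Then $T_\phi f(x)=\Phi(x,\min(x,r))$, so the left-hand side of \eqref{a13} is exactly
\[\int_0^r\Phi(x,x)^{p(x)}w(x)\,dx+\int_r^\infty\Phi(x,r)^{p(x)}w(x)\,dx .\]
The right-hand side equals $C_2\int_0^r\Phi(x,x)^{p(x)-1}w(x)\,dx$. By P3 and $p(x)\ge 1$, this is at most $C_2\int_0^r w(x)\,dx$. Hence \eqref{eq6} holds, so $w\in B_{p(x)}(\phi)$.

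\emph{Sufficiency.} Here I would follow the scheme of Neugebauer's proof of Theorem C and use Lemma \ref{lm1} as the main tool. Fix $f\in\mathcal{D}$ and a small parameter $0<\delta<1$. Since $f\downarrow$, I would split the integral defining $T_\phi f(x)$ at the point
\[ r(x)=\sup\{y: f(y)>\delta\, T_\phi f(x)\}. \]
On $(r(x),x)$ we have $f\le \delta T_\phi f(x)$, so by P3 the tail part $\int_{r(x)}^x\phi(x,y)f(y)\,dy$ is at most $\delta T_\phi f(x)$. Absorbing it gives $(1-\delta)T_\phi f(x)\le \int_0^{\min(x,r(x))}\phi(x,y)f(y)\,dy$.

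This splits $(0,\infty)$ into two regions.
\begin{itemize}
\item On $\{x<r(x)\}$ we have $f(x)>\delta T_\phi f(x)$. Hence $T_\phi f(x)^{p(x)}\le \delta^{-1}f(x)T_\phi f(x)^{p(x)-1}$, which is already of the required form.
\item On $\{x>r(x)\}$ we have $T_\phi f(x)\le (1-\delta)^{-1}\Phi(x,r(x))$, since $f\le 1$. So we must control $\int_{\{x>r(x)\}}\Phi(x,r(x))^{p(x)}w(x)\,dx$.
\end{itemize}
For the second region I would apply Lemma \ref{lm1}, which bounds this quantity by $C\int_0^{r(x)}w$, that is, by $C\int_0^{i_r}w$.

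Finally, $\int_0^{i_r}w$ has to be converted back into $\int f\,T_\phi f^{p(x)-1}w$. On $(0,i_r)$ we have $f(x)>\delta T_\phi f(x)$, but we need a lower bound for $f\,T_\phi f^{p-1}$, not merely a comparison. For this I would write $f$ by the layer-cake formula $f=\int_0^1\chi_{(0,a(\lambda))}\,d\lambda$ with $a(\lambda)\downarrow$. Then I would apply the above splitting level by level, with $r(\cdot)$ replaced by the level radii, and use Fubini. Young's inequality $ab\le a^{p(x)}/p(x)+b^{p'(x)}/p'(x)$, with an $\varepsilon$ inserted, should peel off a term $\varepsilon\int T_\phi f^{p(x)}w$ that can be absorbed into the left-hand side. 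Because $p^*<\infty$, the constants $\varepsilon^{-p(x)}$ stay uniformly bounded. To justify the absorption, I would first truncate $w$ or restrict to a compact $x$-range so that the left side is finite, and then let the truncation go.

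\emph{Main obstacle.} The difficulty lies in the monotonicity hypothesis of Lemma \ref{lm1}. That lemma needs $r(\cdot)$ non-increasing, while the natural cut-off $r(x)$ above is non-decreasing in $x$, since $T_\phi f\downarrow$. I expect this is exactly where P1 enters. Using $\Phi(x,r)\le B\,\Phi(x,t)\Phi(t,r)$ for $r\le t\le x$, one can replace $\Phi(x,r(x))$ by a product in which the inner variable is frozen at a fixed level $t$. Integrating over $t$ (the level-set parameter) then reduces everything to the constant-$r$ condition \eqref{eq6}, or to Lemma \ref{lm1} with a genuinely non-increasing level function. Because $p(\cdot)\uparrow$, the exponent at $x$ dominates the one at $t$, and together with P3 this allows the bound $\Phi(t,r)^{p(x)}\le\Phi(t,r)^{p(t)}$. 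The resulting constant $C_2$ will depend on $C$, $B$, $\delta$ and $p^*$; $\delta$ is fixed, e.g.\ $\delta=1/2$.
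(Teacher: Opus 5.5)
\textbf{Verdict: genuine gap.} Your necessity argument is correct and is the paper's: test \eqref{a13} on $f=\chi_{(0,r)}$ and use P3 to discard $\Phi(x,x)^{p(x)-1}$. The sufficiency part fails, and for a reason beyond the monotonicity problem you flag yourself. Your cut-off $r(\cdot)$ does not change when $f$ is replaced by $\varepsilon f$, because $T_\phi$ is linear. On $\{x>r(x)\}$ you then bound $f$ by $1$. So your bound there, $\int_{\{x>r(x)\}}(1-\delta)^{-p(x)}\Phi(x,r(x))^{p(x)}w\,dx$, does not depend on $\varepsilon$. But the right side of \eqref{a13} for $\varepsilon f$ is at most $\varepsilon\int f\,T_\phi f^{p(x)-1}w$ when $0<\varepsilon\le 1$. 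For example, $f=\varepsilon\chi_{(0,1)}$ gives $r\equiv1$ and a bound that is a fixed multiple of $\int_1^\infty\Phi(x,1)^{p(x)}w$, against a right side $\le\varepsilon\int_0^1w$. So the step ``convert $\int_0^{i_r}w$ back into $\int f\,T_\phi f^{p(x)-1}w$'' cannot hold.

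The repair you sketch is not carried out, and its natural form also fails. Writing $f=\int_0^1\chi_{(0,a(\lambda))}\,d\lambda$, applying \eqref{eq6} to each layer and recombining by convexity of $s\mapsto s^{p(x)}$ gives only $\int T_\phi f^{p(x)}w\le C\int f\,w$. That again has the wrong homogeneity when $p>1$. The remarks about Young's inequality, freezing a variable via P1, and $\Phi(t,r)^{p(x)}\le\Phi(t,r)^{p(t)}$ do not assemble into an argument.

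\textbf{The missing idea.} The paper takes layers not of $f$ but of the $x$-dependent function $t\mapsto f(t)\,T_\phi f(t)^{p(x)-1}$; this is Neugebauer's device. For fixed $x$, let $\xi(x,\cdot)$ be the inverse of this function, so that its superlevel set at height $y$ is $(0,\xi(x,y))$. Since $T_\phi f\le1$ (by P2, P3 and $f(0+)\le1$) and $p\uparrow$, this function is non-increasing in $x$. Hence for every fixed $y$ the map $x\mapsto\xi(x,y)$ is non-increasing, and Lemma \ref{lm1} applies to it. This removes your monotonicity obstruction.

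The rest of the paper's argument runs as follows.
\begin{enumerate}
\item Integrate \eqref{eqa12} over $y\in(0,\infty)$ and use Fubini. The right side becomes exactly $C\int_0^\infty\xi^{-1}(x,x)\,w(x)\,dx=C\int_0^\infty f\,T_\phi f^{p(x)-1}w\,dx$, with no loss of homogeneity.
\item On the left, substitute $t=\xi(x,y)$ and integrate by parts. This produces $\int_0^x f(t)\,T_\phi f(t)^{p(x)-1}p(x)\Phi(x,t)^{p(x)-1}\phi(x,t)\,dt$, plus a boundary term $-\Phi(x,x)^{p(x)}\xi^{-1}(x,x)$. That boundary term cancels the contribution of the first integral in \eqref{eqa12}, which is why that integral is built into \eqref{eq6}.
\item Use P1 in differentiated form, $\phi(x,s)\le B\,\Phi(x,t)\,\phi(t,s)$ for $s\le t\le x$. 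This gives $\Phi(x,t)\,T_\phi f(t)\ge B^{-1}\int_0^t\phi(x,s)f(s)\,ds$.
\item The integrand is then at least $B^{1-p^*}\frac{d}{dt}\bigl(\int_0^t\phi(x,s)f(s)\,ds\bigr)^{p(x)}$, which integrates to $B^{1-p^*}T_\phi f(x)^{p(x)}$.
\end{enumerate}
This yields \eqref{a13} directly with $C_2=C\,B^{p^*-1}$. It needs no splitting parameter $\delta$, no Young's inequality and no absorption.
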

\begin{proof}
	{\bf Necessity.}\\
    Let \eqref{a13} holds. Take $f = \chi_{r}.$ Then its LHS  reduces to 
	\ben
	\int_{0}^{r}\Phi(x,x)^{p(x)} w(x) dx +  \int_{r}^{\infty} \Phi(x,r)^{p(x)} w(x) dx, 
	\een
	and the RHS becomes 
	\begin{align*}
	\int_{0}^{r}\Phi(x,x)^{p(x)-1} w(x) dx \leq   \int_{0}^{r} w(x) dx.
	\end{align*}
	Thus  $w \in B_{p(x)}(\phi)$  i.e.,  \eqref{eq6} holds with $C = C_2.$\\
	
	\noindent 
	{\bf Sufficiency.}\\
	We only need to prove the integral inequality for functions in $\mathcal{D}$	having support in $[0,K],~K>0$ continuous and strictly decreasing on $[0,K],$ with a	constant $C_{2}$ depending only upon $B$(same as in condition P1) and $[w]_{B_{p(x)}(\phi)}.$ Since an arbitrary $f \in \mathcal{D}$	can be approximated by such functions, consequently,  the integral inequality is obtained as a limit.\\
	
\noindent	
Let $ \xi : \mathbb{R}_{+} \times \mathbb{R}_{+} \rightarrow \mathbb{R}_{+}.$ Define $ t = \xi(x,y)$ to be decreasing in $x$ for each $y,$ and continuous and strictly decreasing in $y$ for each $x$. For a fixed $x,$ we denote by $\xi^{-1}(x,t)$ to be the inverse of $t = \xi(x,y),$ i.e.,  $t = \xi(x,\xi^{-1}(x,t)).$ Then $\xi^{-1}(x,t)$ is decreasing in $x$ for each $t,$ and continuous, strictly decreasing in $t$ for each $x.$\\

\noindent
From  Lemma \ref{lm1}, for each $\xi(x,y)$ we have
	\ben
	\int_{0}^{\xi(x,y)} \Phi(x,x)^{p(x)} w(x) dx + \int_{\xi(x,y)}^{\infty} \Phi(x,\xi(x,y))^{p(x)} w(x) dx 
	\leq C_{1} \int_{0}^{\xi(x,y)}  w(x) dx.
	\een
	
	\noindent Integrating the above expression with respect to $y$ on $\mathbb{R}^+,$ we have	
	\be \label{a23}
	\int_{0}^{\infty} \int_{0}^{\xi(x,y)}  \Phi(x,x)^{p(x)} w(x) dx dy + \int_{0}^{\infty}  \int_{\xi(x,y)}^{\infty}  \Phi(x,\xi(x,y))^{p(x)} w(x) dx dy \nonumber \\ 
	\leq C_{1} \int_{0}^{\infty}  \int_{0}^{\xi(x,y)}  w(x) dx dy. 
	\ee
Interchanging the order of integration, the LHS (say, $L$) of the above inequality becomes  
	\ben
	L = \int_{0}^{\infty} \int_{\lge x : x \leq \xi(x,y) \rge}  \Phi(x,x)^{p(x)} w(x) dx dy + \int_{0}^{\infty}   \int_{\lge x: x \geq \xi(x,y) \rge} \Phi(x,\xi(x,y))^{p(x)} w(x) dx dy \\
	= \int_{0}^{\infty} \int_{\lge y : x \leq \xi(x,y) \rge} dy ~\Phi(x,x)^{p(x)} w(x) dx  + \int_{0}^{\infty}   \int_{\lge y: x \geq \xi(x,y) \rge} \Phi(x,\xi(x,y))^{p(x)} dy ~w(x) dx.  
	\een
	If $\xi^{-1}(x,x) = i_{r}(x),$ then $\lge y: \xi(x,y) \leq x\rge = \lge y : y> \xi^{-1}(x,x) \rge = [i_{r}(x), \infty)$.\\
	Similarly, $\lge y: \xi(x,y) \geq x\rge = \lge y : y <\xi^{-1}(x,x) \rge = [0,i_{r}(x)].$ Thus, we have
	\begin{align} \label{a22}
	L &= \int_{0}^{\infty} \lr \int_{0}^{i_{r}(x)} dy \rr ~\Phi(x,x)^{p(x)} w(x) dx  + \int_{0}^{\infty} \lr  \int_{i_{r}(x)}^{\infty} \Phi(x,\xi(x,y))^{p(x)} dy \rr ~w(x) dx  \nonumber \\
	  &= \int_{0}^{\infty} {i_{r}(x)}  ~\Phi(x,x)^{p(x)} w(x) dx  + \int_{0}^{\infty} \lr  \int_{i_{r}(x)}^{\infty} \Phi(x,\xi(x,y))^{p(x)} dy \rr ~w(x) dx
	\end{align}
On taking $t = \xi(x,y),$ we get
	\ben
	\int_{i_{r}(x)}^{\infty} \Phi(x,\xi(x,y))^{p(x)} dy = -\int_{0}^{x} \Phi(x,t)^{p(x)} d(\xi^{-1}(x,t)).
	\een
	On computing the RHS of the above by using integration by parts, we obtain 
	\be \label{a20}
	\int_{i_{r}(x)}^{\infty} \Phi(x,\xi(x,y))^{p(x)} dy = -\Phi(x,x)^{p(x)} i_{r}(x) + \int_{0}^{x} \xi^{-1}(x,t) p(x) \Phi(x,t)^{p(x)-1} \phi(x,t) dt. 
	\ee
	 
Take $\xi^{-1}(x,t) = f(t) T_\phi f(t)^{p(x)-1},$ so that \eqref{a20} becomes 
	 \begin{align} \label {a21}
     \int_{i_{r}(x)}^{\infty} \Phi(x,\xi(x,y))^{p(x)} dy  = \int_{0}^{x} f(t) \lr \int_{0}^{t} \phi(t,s) f(s) ds \rr^{p(x)-1} & p(x) \Phi(x,t)^{p(x)-1} \phi(x,t) dt \nonumber \\ 
		& - \Phi(x,x)^{p(x)} i_{r}(x).
	 \end{align}
Using the condition P1 with $r=y,$ we have
	\[\int_{0}^{y} \phi(x,s) ds \le B \Phi(x,t) \int_{0}^{y} \phi(t,s) ds,~0<y \le t \le x.\]
	Differentiating both sides with respect to $y,$ we have
	\[\phi(x,y) \le B \Phi(x,t) \phi(t,y).\]
	Multiplying both sides of the above with $f(y)$ and integrating with respect to $y$  from 0 to $t,$ we have
	\ben
     \int_{0}^{t} \phi(t,s) f(s) ds \geq \frac{1}{B\Phi(x,t)} \int_{0}^{t} \phi(x,s) f(s) ds,~0<t \le x,
     \een     
so that, \eqref{a21} becomes
  \begin{align*}
  \int_{i_{r}(x)}^{\infty} \Phi(x,\xi(x,y))^{p(x)} dy & \ge \int_{0}^{x} \frac{1}{B^{p(x)-1}} f(t) \lr \int_{0}^{t} \phi(x,s) f(s) ds \rr^{p(x)-1}  p(x)   \phi(x,t) dt - \Phi(x,x)^{p(x)} i_{r}(x) \\
  &\ge \frac{1}{B^{p^*-1}} \lr \int_{0}^{x} \phi(x,s) f(s) ds \rr^{p(x)} -  \Phi(x,x)^{p(x)} i_{r}(x) \\
  &= \frac{1}{B^{p^*-1}} \lr T_\phi f(x) \rr^{p(x)} -  \Phi(x,x)^{p(x)} i_{r}(x).
    \end{align*}   
     Thus, \eqref{a22} becomes
     \begin{align} \label{eq21b}
    L \geq  \int_{0}^{\infty} {i_{r}(x)}  ~\Phi(x,x)^{p(x)} w(x) dx  &+ \int_{0}^{\infty} \left [ \frac{1}{B^{p^*-1}} \lr T_\phi f(x) \rr^{p(x)} - {i_{r}(x)}  ~\Phi(x,x)^{p(x)} \right]w(x) dx \nonumber \\ 
    &= \int_{0}^{\infty}  \frac{1}{B^{p^*-1}} \lr T_\phi f(x) \rr^{p(x)} ~w(x) dx. 
     \end{align}
Also,  the right side of \eqref{a23} is
     \be \label{eq21c}
     \int_{0}^{\infty} {i_{r}(x)}  w(x) dx. 
     \ee
Now, on taking $i_r(x)= \xi^{-1}(x,x) = f(x) T_\phi f(x)^{p(x)-1},$ using \eqref{eq21b} and \eqref{eq21c}, the weight condition \eqref{a23} gives
     \ben
     \int_{0}^{\infty}  \lr T_\phi f(x) \rr^{p(x)} ~w(x) dx \leq C_2 \int_{0}^{\infty} f(x) T_{\phi}f(x)^{p(x)-1} w(x) dx
     \een
     where $C_2 = C_{1} B^{p^*-1}$. 
\end{proof}

Now we give the main result of the paper.

\begin{theorem} \label{th21}
	Let  $ p: \mathbb{R}_{+} \rightarrow [1,\infty),~ p(\cdot) \uparrow$ and $1\leq p(x) \leq p^{*} < \infty$. Also, suppose the conditions P1-P3 hold. Then, there exists a constant $C_{0}$ such that the inequality 
	\be \label{a15}
	\int_{0 }^{\infty} T_{\phi}f(x)^{p(x)} w(x) dx \leq C_{0} \int_{0}^{\infty} f(x)^{p(x)} w(x) dx,
	\ee	
	holds for every $f \in \mathcal{D} $ if and only if $w \in B_{p(x)}({\phi}) .$
\end{theorem}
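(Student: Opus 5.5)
Necessity goes exactly as in Theorem \ref{th13}. Suppose \eqref{a15} holds for every $f\in\mathcal{D}$ and test it on $f=\chi_{[0,r]}$, which lies in $\mathcal{D}$. The left-hand side equals
\[
\int_0^r \Phi(x,x)^{p(x)}w(x)\,dx+\int_r^\infty \Phi(x,r)^{p(x)}w(x)\,dx,
\]
and the right-hand side is $C_0\int_0^r w(x)\,dx$. Hence \eqref{eq6} holds with $C=C_0$, so $w\in B_{p(x)}(\phi)$.

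For sufficiency, let $w\in B_{p(x)}(\phi)$. Theorem \ref{th13} then gives
\[
\int_0^\infty T_\phi f(x)^{p(x)}w(x)\,dx\le C_2\int_0^\infty f(x)\,T_\phi f(x)^{p(x)-1}w(x)\,dx
\]
for all $f\in\mathcal{D}$. We bound the integrand on the right pointwise by Young's inequality with the variable exponents $p(x)$ and $p'(x)$ and a small parameter $\varepsilon\in(0,1)$. Where $p(x)>1$,
\[
f\,(T_\phi f)^{p(x)-1}=\bigl(\varepsilon^{-(p(x)-1)}f\bigr)\bigl(\varepsilon\, T_\phi f\bigr)^{p(x)-1}\le \frac{\varepsilon^{-(p(x)-1)p(x)}}{p(x)}f^{p(x)}+\frac{p(x)-1}{p(x)}\varepsilon^{p(x)}(T_\phi f)^{p(x)}.
\]
Since $1\le p(x)\le p^*$, this gives the uniform bound
\[
f\,(T_\phi f)^{p(x)-1}\le \varepsilon^{-p^*(p^*-1)}f^{p(x)}+\varepsilon\,(T_\phi f)^{p(x)}.
\]
Where $p(x)=1$, the integrand is just $f=f^{p(x)}$, so the same bound holds. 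Substituting, we get
\[
\int_0^\infty T_\phi f(x)^{p(x)}w(x)\,dx\le C_2\varepsilon^{-p^*(p^*-1)}\int_0^\infty f(x)^{p(x)}w(x)\,dx+C_2\varepsilon\int_0^\infty T_\phi f(x)^{p(x)}w(x)\,dx.
\]
Choose $\varepsilon=1/(2C_2)$ and absorb the last term into the left-hand side. This yields \eqref{a15} with $C_0=2C_2(2C_2)^{p^*(p^*-1)}$, a constant depending only on $[w]_{B_{p(x)}(\phi)}$, $B$ and $p^*$.

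The main obstacle is the absorption step, which requires $\int_0^\infty T_\phi f(x)^{p(x)}w(x)\,dx<\infty$. We may assume the right-hand side of \eqref{a15} is finite, since otherwise there is nothing to prove. Following the reduction in Theorem \ref{th13}, first take $f$ supported in $[0,K]$, and truncate the weight to $w_N=w\chi_{[0,N]}$. The truncated weight still satisfies \eqref{eq6} with the same constant: for $r\ge N$ both sides reduce to integrals over $[0,N]$ and the inequality follows from $r=N$ together with $\Phi\le 1$. For $w_N$ the modular of $T_\phi f$ is finite, because $T_\phi f\le\Phi(x,x)\le1$ by P3 and $w$ is locally integrable. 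Run the argument with $w_N$ and let $N\to\infty$ by monotone convergence. Finally, pass to general $f\in\mathcal{D}$ by the same approximation used in Theorem \ref{th13}, using monotone convergence on the left-hand side.
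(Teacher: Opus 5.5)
Your proof is correct and is essentially the paper's argument. Necessity is obtained by testing on $\chi_{[0,r]}$, and sufficiency by applying Theorem~\ref{th13} to the truncated weights $w\chi_{(0,n)}$, which keep the $B_{p(x)}(\phi)$ constant and make the left side finite since $T_\phi f\le \Phi(x,x)\le 1$, followed by Young's inequality, absorption and monotone convergence. The only difference is cosmetic: you put the small parameter $\varepsilon$ directly into Young's inequality, whereas the paper gets the same effect by replacing $f$ with $f/\lambda_0$ for a fixed $\lambda_0>C_2$, which yields $C_0=C_2\lambda_0^{p^*}/(\lambda_0-C_2)$ instead of your $2C_2(2C_2)^{p^*(p^*-1)}$. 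Also, your preliminary reduction to compactly supported $f$ is not needed, since Theorem~\ref{th13} already covers all of $\mathcal{D}$.
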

\begin{proof}
  	{\bf Necessity.}\\
  	Suppose \eqref{a15} holds for all $f \in \mathcal{D}$.
  	Let $f=\chi_{r}$. 
  	Then the LHS of \eqref{eq6} gets reduced to 
  	\begin{align*}
  	\int_{0}^{r}\Phi(x,x)^{p(x)} w(x) dx +  \int_{r}^{\infty} \Phi(x,r)^{p(x)} w(x) dx,  	
  	\end{align*}
  	and its RHS becomes  $ \int_{0}^{r} w(x) dx,$ hence  $w \in B_{p(x)}({\phi}).$\\
		
  	\noindent
   {\bf Sufficiency.}\\
   Let $w \in B_{p(x)}({\phi})$. Then it is easy to see that $w_n = w\chi_{(0,n)},~n\in \mathbb{N},$ also satisfies \eqref{eq6}. \\
   Using  Theorem \ref{th13}, we have 
   \be \label{a17}
   \int_{0 }^{\infty} (T_{\phi}f(x))^{p(x)} w_n(x) dx \leq C_2 \int_{0}^{\infty} f(x) T_{\phi}f(x)^{p(x)-1} w_n(x) dx,
   \ee   
where $C_2 >1$ does not depend on $N$. Fix $\lambda_{0} > C_{2} >1,$ then $ \frac{f}{\lambda_{0}} \in D$ if $f \in \mathcal{D}.$ Then on replacing $f$ by $\frac{f}{\lambda_{0}}$ in \eqref{a17} and using Young's inequality, we obtain
   \begin{align*}
   \int_{0 }^{\infty} \lr\frac{T_{\phi}f(x)}{\lambda_{0}}\rr^{p(x)} w_n(x) dx &= \int_{0 }^{\infty}   T_{\phi} ({f/\lambda_{0}})(x)^{p(x)} w_n(x) dx \\
   &\leq \frac{C_2}{\lambda_{0}} \int_{0}^{\infty} f(x) T_{\phi}(f/\lambda_{0})(x)^{p(x)-1} w_n(x) dx \\
   &\leq \frac{C_2}{\lambda_{0}}  \int_{0}^{\infty} \lr \frac{f(x)^{p(x)}}{p(x)} + \frac{T_{\phi}(f/\lambda_{0})(x)^{(p(x)-1)(p'(x))}}{p'(x)}  \rr w_n(x) dx \\
    &\leq \frac{C_2}{\lambda_{0}}  \int_{0}^{\infty} \lr f(x)^{p(x)} + T_{\phi}(f/\lambda_{0})(x)^{p(x)}  \rr w_n(x) dx\\
    &= \frac{C_2}{\lambda_{0}}  \int_{0}^{\infty}  f(x)^{p(x)} w_n(x) dx + \frac{C_2}{\lambda_{0}} \int_{0}^{\infty} T_{\phi}(f/\lambda_{0})(x)^{p(x)}  w_n(x) dx\\
    &= \frac{C_2}{\lambda_{0}}  \int_{0}^{\infty}  f(x)^{p(x)} w_n(x) dx + \frac{C_2}{\lambda_{0}} \int_{0}^{\infty} \lr\frac{T_{\phi}f(x)}{\lambda_{0}}\rr^{p(x)} w_n(x) dx.
   \end{align*}  
Thus, 
    \be \label{a18}
    \lr 1 - \frac{C_2}{\lambda_{0}} \rr \int_{0 }^{\infty} \lr\frac{T_{\phi}f(x)}{\lambda_{0}}\rr^{p(x)} w_n(x) dx \leq  \frac{C_2}{\lambda_{0}}  \int_{0}^{\infty}  f(x)^{p(x)} w_n(x) dx. 
    \ee
 Also, 
   \be \label{a19}
   \lr 1 - \frac{C_2}{\lambda_{0}} \rr \int_{0 }^{\infty} \lr\frac{T_{\phi}f(x)}{\lambda_{0}}\rr^{p(x)} w_n(x) dx \geq  \lr \frac{\lambda_{0}-C_{2}}{\lambda_{0}^{p^*+1}} \rr  \int_{0 }^{\infty} T_{\phi}f(x)^{p(x)} w_n(x) dx.  
   \ee 
So, \eqref{a18} and \eqref{a19} combined together give
    \ben
    \lr \frac{\lambda_{0}-C_{2}}{\lambda_{0}^{p^*+1}} \rr  \int_{0 }^{\infty} T_{\phi}f(x)^{p(x)} w_n(x) dx \leq \frac{C_2}{\lambda_{0}}  \int_{0}^{\infty}  f(x)^{p(x)} w_n(x) dx, 
    \een
or
  \ben
   \int_{0 }^{\infty} T_{\phi}f(x)^{p(x)} w_n(x) dx \leq  C_{0} \int_{0}^{\infty}  f(x)^{p(x)} w_n(x) dx  
  \een
  where $C_{0} =\frac{C_2 \lambda_{0}^{p^*}}{\lambda_{0}-C_2}.$  Taking $n \rightarrow \infty,$ we get the desired result. \\
\end{proof}

 \noindent
Below given corollary summarizes the equivalence of $B_{p(x)}(\phi)$ weights obtained from Lemma \ref{lm1}, Theorems \ref{th13} and \ref{th21}.
	
  \begin{corollary} \label{th25}
  	Let  $ p: \mathbb{R}_{+} \rightarrow [1,\infty), ~p(\cdot) \uparrow$ and $1\leq p(x) \leq p^{*} < \infty$. Also, suppose the conditions P1-P3 hold. Then the following are equivalent:
  	\benum
  	\item $w \in B_{p(x)}(\phi),$ with a constant $C$ in  \eqref{eq6};
  	\item  There exists a constant $C_{1}$ such that for every $r(\cdot) \downarrow,$ the weight condition \eqref{eqa12} holds;
  	\item  There exists a constant $C_{2}$ such that for all $f\in \mathcal{D}$, the inequality \eqref{a13} holds;
  	\item  There exists a constant $C_{0}$ such that for all $f\in \mathcal{D}$, the inequality \eqref{a15} holds.
  	\eenum
  	Also, $C_{1}=C,~ C_{2}=C_{1}B^{p^*-1},~ C_{0}= \frac{{C_2} \lambda_{0}^{p^*}}{\lambda_{0}-C_2}.$\\
  \end{corollary}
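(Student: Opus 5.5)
The corollary only collects implications already proved, so I will prove it as a cycle $(1)\Rightarrow(2)\Rightarrow(3)\Rightarrow(4)\Rightarrow(1)$ and keep track of the constants along the way.

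For $(1)\Leftrightarrow(2)$ I invoke Lemma~\ref{lm1}. The converse part of its proof shows that if $w$ satisfies \eqref{eq6} with constant $C$, then \eqref{eqa12} holds for every $0\le r(\cdot)\downarrow$ with $C_1=C$. The direction $(2)\Rightarrow(1)$ follows by taking $r(\cdot)\equiv r$ constant.

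For $(2)\Rightarrow(3)$ I follow the sufficiency part of Theorem~\ref{th13}. I apply \eqref{eqa12} to the family $r(\cdot)=\xi(\cdot,y)$, integrate in $y$, and choose $\xi^{-1}(x,t)=f(t)T_\phi f(t)^{p(x)-1}$. The Note guarantees that this choice is admissible, namely decreasing in $x$ and continuous and strictly decreasing in $t$. Condition P1 then gives $T_\phi f(t)\ge \frac{1}{B\Phi(x,t)}\int_0^t\phi(x,s)f(s)\,ds$, and this bound produces \eqref{a13} with $C_2=C_1B^{p^*-1}$. The direction $(3)\Rightarrow(1)$ is the necessity part of Theorem~\ref{th13}: testing with $f=\chi_r$ and using P3, $\Phi(x,x)^{p(x)-1}\le 1$.

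For $(3)\Rightarrow(4)$ I use the sufficiency argument of Theorem~\ref{th21}. I replace $w$ by the truncations $w_n=w\chi_{(0,n)}$, which still satisfy \eqref{eq6}, and I apply \eqref{a13} to $f/\lambda_0$ with $\lambda_0>C_2$. Young's inequality splits the right-hand side, and the term in $T_\phi f$ is absorbed into the left. This step needs that term to be finite, which is exactly why the truncation $w_n$ is used: $T_\phi f\le 1$ and $\int_0^n w<\infty$ by local integrability. Using $p(x)\le p^*$ gives \eqref{a15} for $w_n$ with $C_0=C_2\lambda_0^{p^*}/(\lambda_0-C_2)$, and monotone convergence as $n\to\infty$ finishes this step. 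Finally, $(4)\Rightarrow(1)$ again follows by testing with $f=\chi_r$, since then $f^{p(x)}=\chi_r$.

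The main obstacle I expect is the bookkeeping in $(2)\Rightarrow(3)$. One must check that $\xi(x,y)$ satisfies the monotonicity hypotheses uniformly, which uses P2 and the reduction to continuous, strictly decreasing $f$ with compact support. One must also confirm that the point $i_r(x)=\xi^{-1}(x,x)$ is well defined, so that the constants pass through unchanged.
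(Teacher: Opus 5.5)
Your proposal is correct and is essentially the paper's argument: there, the corollary is obtained simply by combining Lemma \ref{lm1} with the necessity and sufficiency parts of Theorems \ref{th13} and \ref{th21}, with the constants tracked along the chain starting from (1), exactly as you do. Your remark that the truncation $w_n=w\chi_{(0,n)}$ is what makes the absorbed term finite (since $T_\phi f\le 1$ and $\int_0^n w<\infty$) makes explicit a point the paper leaves unstated.
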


\noindent  
In our next theorem we give a condition, which leads the modular inequality \eqref{a15} to a norm inequality in $L_w^{p(x)}.$
	
 \begin{theorem}{\label{th26}}
 	Let $p:\mathbb{R}_{+} \rightarrow [1,\infty)$ and $w$ be a weight defined on $\mathbb{R}_{+}.$ Assume there exists a constant $1\leq C_{0} < \infty$ such that the inequality
 	\be \label{eq311a}
 	\int_{0 }^{\infty} (T_{\phi}f(x))^{p(x)} w(x) dx \leq C_{0} \int_{0}^{\infty} f(x)^{p(x)} w(x) dx,
 	\ee
 	holds for every $f \in \mathcal{D}.$ Then, we have
 	\[
 	    \| T_{\phi}f \|_{p(x),w} \leq C_{0} ||f||_{p(x),w}  	  
 	 \]   
 	if $||f||_{p(x),w} \geq 1/C_{0}.$ 
	
 \end{theorem}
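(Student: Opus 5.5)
The plan is a direct scaling argument. We feed a suitably normalized version of $f$ into \eqref{eq311a} and use the definition of the Luxemburg norm, together with the linearity of $T_\phi$ and the lower bound $p(x)\ge 1$.

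Fix $f\in\mathcal{D}$ with $\lambda:=\|f\|_{p(x),w}\ge 1/C_0$. If $\lambda=\infty$ there is nothing to prove, so assume $\lambda<\infty$. Take any $\lambda'>\lambda$ and set $\mu:=C_0\lambda'$. Then $\mu> C_0\lambda\ge 1$.

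\textbf{Step 1: $f/\mu\in\mathcal{D}$.} The function $f/\mu$ is non-negative and non-increasing, and $(f/\mu)(0+)\le f(0+)/\mu\le 1$ because $\mu\ge 1$. This is exactly where the hypothesis $\|f\|_{p(x),w}\ge 1/C_0$ is used. Since $T_\phi$ is linear, $T_\phi(f/\mu)=T_\phi f/\mu$.

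\textbf{Step 2: apply the modular inequality.} Applying \eqref{eq311a} to $f/\mu$ gives
\[
\int_0^\infty \lr \frac{T_\phi f(x)}{\mu}\rr^{p(x)} w(x)\,dx \le C_0\int_0^\infty \lr\frac{f(x)}{C_0\lambda'}\rr^{p(x)} w(x)\,dx .
\]
Since $C_0\ge 1$ and $p(x)\ge 1$, we have $C_0^{-p(x)}\le C_0^{-1}$. Hence the right-hand side is at most
\[
\int_0^\infty \lr f(x)/\lambda'\rr^{p(x)}w(x)\,dx .
\]

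\textbf{Step 3: use the definition of the norm.} Because $\lambda'>\lambda$ and the modular is non-increasing in the scaling parameter, the definition of the Luxemburg norm gives $\int_0^\infty (f/\lambda')^{p(x)}w\,dx\le 1$. Therefore $\int_0^\infty (T_\phi f/\mu)^{p(x)}w\,dx\le 1$, which means $\|T_\phi f\|_{p(x),w}\le \mu=C_0\lambda'$. Letting $\lambda'\downarrow\lambda$ yields $\|T_\phi f\|_{p(x),w}\le C_0\|f\|_{p(x),w}$.

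The only delicate points are the ones flagged in the steps. First, the normalized function must stay in $\mathcal{D}$ (the restriction $f(0+)\le 1$), which forces $\mu\ge 1$ and explains the lower bound on the norm. Second, the infimum in the Luxemburg norm need not be attained, which is handled by working with $\lambda'>\lambda$ and passing to the limit.
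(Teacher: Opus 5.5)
Your proof is correct and is essentially the paper's argument. The paper runs the same scaling as a chain of infima: it restricts to $\lambda\ge 1$ so that $f/\lambda\in\mathcal{D}$, uses $C_0\le C_0^{p(x)}$, substitutes $\sigma=\lambda/C_0$, and invokes $\|f\|_{p(x),w}\ge 1/C_0$ to identify the restricted infimum with the norm; your fixed $\lambda'>\|f\|_{p(x),w}$ with the limit $\lambda'\downarrow\|f\|_{p(x),w}$ is just the pointwise form of those set inclusions.
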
 

  \proof We have
   
   \begin{align*}
   	\| T_{\phi}f \|_{p(x),w} &= \inf \lge \lambda >0 : \int_{0}^\infty \lr \frac{T_{\phi}f(x)}{\lambda} \rr^{p(x)} w(x) dx \leq 1 \rge \\
   	&\leq \inf \lge \lambda \geq 1 : \int_{0}^\infty \lr \frac{T_{\phi}f(x)}{\lambda} \rr^{p(x)} w(x) dx \leq 1 \rge \\
   	&\leq \inf \lge \lambda \geq 1 : C_{0} \int_{0}^\infty \lr  \frac{f(x)}{\lambda} \rr^{p(x)} w(x) dx \leq 1 \rge \\
   	&\leq \inf \lge \lambda \geq 1 :  \int_{0}^\infty \lr  \frac{f(x)}{\lambda/C_{0}} \rr^{p(x)} w(x) dx \leq 1 \rge \\ 
  	&= \inf \lge C_{0}\sigma \geq 1 :  \int_{0}^\infty \lr  \frac{f(x)}{\sigma} \rr^{p(x)} w(x) dx \leq 1 \rge \\ 
    &= C_{0} \inf \lge \sigma \geq 1/C_{0} :  \int_{0}^\infty \lr  \frac{f(x)}{\sigma} \rr^{p(x)} w(x) dx \leq 1 \rge \\ 
    &\leq C_0 \|f\|_{p(x),w}.
   \end{align*}

\begin{corollary}
Let $p:\mathbb{R}_{+} \rightarrow [1,\infty)$ and $w$ be a weight defined on $\mathbb{R}_{+}.$ Assume there exists a constant $1\leq C_{0} < \infty$ such that the inequality \eqref{eq311a} holds for every $f \in \mathcal{D}.$ Then, we have
 	\[
 	    \| T_{\phi}f \|_{p(x),w} \leq C_{0} ||f||_{p(x),w}  	  
 	 \]   
 	if  $0 \le f\downarrow$ on $\mathbb{R}_{+}$ and $\frac{f}{||f||_{p(x),w}} \in \mathcal{D}.$\\
 
\end{corollary}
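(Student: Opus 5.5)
The plan is to derive the corollary from Theorem \ref{th26} by a normalization argument. Set $\lambda := \|f\|_{p(x),w}$ and $g := f/\lambda$. If $\lambda = 0$, then $f=0$ a.e.; for $0\le f\downarrow$ this forces $f\equiv 0$ on $(0,\infty)$, so $T_\phi f=0$ and the inequality holds trivially. From now on assume $0<\lambda<\infty$, where finiteness is implicit in the hypothesis $g\in\mathcal{D}$.

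By hypothesis $g\in\mathcal{D}$, so the modular inequality \eqref{eq311a} applies to $g$ and gives
\[
\int_0^\infty (T_\phi g(x))^{p(x)} w(x)\,dx \le C_0\int_0^\infty g(x)^{p(x)} w(x)\,dx .
\]
Since $T_\phi$ is linear in $f$, we have $T_\phi g = T_\phi f/\lambda$.

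Next we bound the right-hand side. By the definition of the Luxemburg norm,
\[
\int_0^\infty \lr \frac{f(x)}{\mu}\rr^{p(x)} w(x)\,dx \le 1 \quad \text{for every } \mu>\lambda .
\]
Letting $\mu\downarrow\lambda$ and using monotone convergence, this gives $\int_0^\infty g^{p(x)}w\,dx\le 1$. Hence
\[
\int_0^\infty \lr\frac{T_\phi f(x)}{\lambda}\rr^{p(x)} w(x)\,dx\le C_0 .
\]

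Now we pass from this modular bound to a norm bound. Since $C_0\ge 1$ and $p(x)\ge 1$, we have $(C_0)^{-p(x)}\le C_0^{-1}$, and therefore
\[
\int_0^\infty \lr\frac{T_\phi f(x)}{C_0\lambda}\rr^{p(x)} w(x)\,dx \le \frac{1}{C_0}\int_0^\infty \lr\frac{T_\phi f(x)}{\lambda}\rr^{p(x)} w(x)\,dx\le 1 .
\]
By the definition of the norm, $\|T_\phi f\|_{p(x),w}\le C_0\lambda = C_0\|f\|_{p(x),w}$, which is the claim.

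An alternative route is to apply Theorem \ref{th26} directly to $g$, noting that $\|g\|_{p(x),w}=1\ge 1/C_0$, and then rescale using the homogeneity of both the norm and $T_\phi$. Theorem \ref{th26} as stated does not explicitly require $f\in\mathcal{D}$, though its proof implicitly uses \eqref{eq311a} for $f/\lambda$. The direct argument above avoids this subtlety, so I would use it.

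The only delicate point is the monotone-convergence step showing that the modular of $f/\|f\|$ is at most $1$. This holds in general for the Luxemburg norm by Fatou's lemma, so I expect no real obstacle.
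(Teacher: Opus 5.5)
Your argument is correct and is essentially the paper's proof: the paper normalizes to $g=f/\|f\|_{p(x),w}\in\mathcal{D}$, applies Theorem~\ref{th26} to $g$ (using $\|g\|_{p(x),w}=1\ge 1/C_0$), and then uses homogeneity, which is exactly your ``alternative route.'' Your direct version simply inlines the rescaling fact $C_0\le C_0^{p(x)}$ that drives the proof of Theorem~\ref{th26}.

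One small slip, in your side remark only: $\|f\|_{p(x),w}=0$ gives $f=0$ for $w\,dx$-a.e.\ $x$, not Lebesgue-a.e., so it need not force $f\equiv 0$ when $w$ vanishes on a set of positive measure. This does not affect the result, since the hypothesis $f/\|f\|_{p(x),w}\in\mathcal{D}$ already presupposes $\|f\|_{p(x),w}>0$.
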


\proof  Define $g(x) = \frac{f(x)}{\|f\|_{p(x),w}}.$ Then, clearly  $g \in \mathcal{D},$  i.e., the function $g$ satisfies \eqref{eq311a}. Moreover, $\|g\|_{p(x),w} = 1 \geq 1/C_{0}.$ Consequently, by Theorem \ref{th26} we have 
   \[
 	    \| T_{\phi}g \|_{p(x),w} \leq C_{0} ||g||_{p(x),w}.  	  
 	 \] 
  Hence 
  \[
 	    \| T_{\phi}f \|_{p(x),w} \leq C_{0} ||f||_{p(x),w}.  	  
 	 \] \hfill $\square$ 
	
\section{Examples}	
	
Below we give examples deduced from Theorem \ref{th21}, which give modular inequalities in variable Lebesgue spaces setting for some standard operators studied in the literature.\\
 
\noindent 
{\bf Example 1.}
If we take $\phi(x,y) = \frac{\psi(y)}{\Psi(x)} \chi_{(0,x]}(y),$ where $\Psi(x) = \int_{0}^{x} \psi(y) dy$ and $0 \le \psi \downarrow $being locally integrable,  then $T_{\phi}f(x) = S_{\psi}f(x), ~0\le f \downarrow,$ where $S_{\psi}$ is the generalized Hardy operator.  It is easy to see that the above choice of $\phi(x,y)$ satisfies all the three conditions P1-P3. Hence Theorem \ref{th21} gives a modular inequality for the operator $S_{\psi}$ on the spaces $L_w^{p(x)},$ which is Theorem 2.1 \cite{ssjp}. Also it generalizes Theorem 2.1 (with $s=1$) \cite{bs1}, and Theorem 3 \cite{and} to variable Lebesgue setting.  Further,  for the same choice of $\phi$, the above Theorem \ref{th26} generalizes Theorem 4 of \cite{nb2}. \\
   
\noindent 
{\bf Example 2.}	
    For $\phi(x,y) = \frac{1}{qx^{1/q}y^{1/q'}} \chi_{(0,x]}(y),$ the operator  $T_{\phi}$ becomes the operator $A_{q}$ which is defined as 
   \[
        A_{q}f(x) = \frac{1}{x^{1/q}} \int_{0}^{x} \frac{f(t)}{t^{1/q'}} dt,
   \]
see \cite{nb3}. The chosen $\phi(x,y)$ satisfies all the three conditions P1-P3. For this choice of $\phi,$ Theorem \ref{th21} reduces to Corollary 2.3 of \cite{ssjp}. Further,  with $q=1$ in $A_{q},$ it becomes the Hardy averaging operator which has been discussed in \cite{nb2} on variable Lebesgue space.\\

\noindent 
{\bf Example 3.}	
	Let $ 1 < p_{*}\leq p(x)\leq p^{*} < \infty,$ where $p_{*}:=\inf_{x \in (0,\infty)} p(x),$ and $\phi(x,y) = \frac{x^{\alpha-1}}{y^{\alpha}}, ~\alpha\leq 0,$  then  $T_\phi$ reduces to the following operator
	\[	H^{\alpha}f(x) := x^{\alpha-1} \int_{0}^{x} \frac{f(y)}{y^{\alpha}} dy,\] 	
	see \cite{ds}. Then, $\Phi(x,r) = \frac{1}{1-\alpha} \lr\frac{r}{x}\rr^{1-\alpha}.$ It is easy to see that $\Phi$ satisfies P1-P3 and,
\[\Phi(x,x) = \frac{1}{1-\alpha} \leq 1.\]\\ 
Using the fact that  $\lr\frac{1}{1-\alpha}\rr^{p(x)} \leq \lr\frac{1}{1-\alpha}\rr^{p_{*}} $ and $\lr\frac{r}{x}\rr^{p(x)} \leq \lr\frac{r}{x}\rr^{p_{*}},$ we obtain
    \ben 
    \int_{0}^{r}\Phi(x,x)^{p(x)}  dx +  \int_{r}^{\infty} \Phi(x,r)^{p(x)}  dx \leq \frac{(1-\alpha)p_{*}r}{(1-\alpha)^{p_*}[(1-\alpha)p_{*}-1]}  = C \int_{0}^{r} dx,
    \een  
where $C=\frac{(1-\alpha)p_{*}}{(1-\alpha)^{p_*}[(1-\alpha)p_{*}-1]}.$
Thus, using Theorem \ref{th21} with $w=1,$ we get that the inequality
	\ben 
	\int_{0 }^{\infty} H^{\alpha}f(x)^{p(x)}  dx \leq C \int_{0}^{\infty} f(x)^{p(x)}  dx
	\een	
	holds for every $f \in \mathcal{D}.$  	\\
	
	\noindent 
{\bf Example 4.}	
   	With the operator $H^{\alpha}$ as above and under the same conditions, let us consider the weight function $w(x) = x^\gamma.$ Then, the weight class condition 
\eqref{eq6} gives
   	\ben 
   	\int_{0}^{r}\Phi(x,x)^{p(x)} x^\gamma dx +  \int_{r}^{\infty} \Phi(x,r)^{p(x)} x^\gamma  dx \leq \lr \frac{1}{1-\alpha} \rr ^{p_{*}} \lek \frac{r^{\gamma+1}}{\gamma +1} + \frac{r^{\gamma+1}}{(1-\alpha)p_{*}-\gamma-1} \rek = C \int_{0}^{r} x^\gamma dx,
   	\een 
   which	holds  if and only if $-1<\gamma<(1-\alpha)p_{*}-1,$ where $C = \lr \frac{1}{1-\alpha} \rr ^{p_{*}} \lek 1 + \frac{\gamma+1}{(1-\alpha)p_{*}-\gamma-1} \rek $  .
Consequently, the inequality
   	\ben 
   	\int_{0 }^{\infty} H^{\alpha}f(x)^{p(x)} x^\gamma  dx \leq C \int_{0}^{\infty} f(x)^{p(x)} x^\gamma dx,
   	\een	
   	holds for all $f\in \mathcal{D}$ if and only if $-1<\gamma<(1-\alpha)p_{*}-1.$\\

\bigskip

\noindent {\it Conflict of interest statement.} The authors state that there is no conflict of interest.

\medskip
\noindent{\it Acknowledgment.} Nil

\vspace{20pt}

\noindent  Research Scholar, Department of Mathematics\\
University of Delhi, Delhi - 110007, India\\
(Email: megha.madan110@gmail.com)

\bigskip
\noindent Department of Mathematics\\
Dyal Singh College (University of Delhi)\\
Lodhi Road, Delhi - 110003, India\\
(Email: arunpalsingh@dsc.du.ac.in)

\end{document}